\documentclass[11pt,leqno]{amsart}
\usepackage[letterpaper, total={6in, 8in}, left=1.25in, top=1.5in]{geometry}
\usepackage{amsmath}
\usepackage{amsfonts}
\usepackage{amssymb}
\usepackage{graphicx}
\usepackage{color}
\usepackage{hyperref}
\usepackage{xcolor}

\newtheorem{theorem}{Theorem}[section]

\newtheorem{lemma}{Lemma}[section]

\newtheorem{remark}[theorem]{Remark}

\def\k{\kappa}

\def\l{\lambda}

\def\p{\partial}

\def\S{\Bbb S}
\def\R{\mathbb{R}}

\def\Sph{\Bbb S}

\def\k{\kappa}

\def\H{\mathbb{H}}

\def \p {\partial}

\def\V{\operatorname{Vol}}

\def\tr{\operatorname{tr}}

\def\tr{\operatorname{tr}}

\numberwithin{equation}{section}

\begin{document}

\title[Neumann eigenvalues of the Witten-Laplacian]{An isoperimetric inequality for  Neumann eigenvalues with radial log-concave measures}

\author{Sun xiaomei } 
\address{College of informatics, Huazhong Agricultural University, 430070, Wuhan, China}
\email{xmsunn@mail.hzau.edu.cn}

\author{Kui Wang} 
\address{School of Mathematical Sciences, Soochow University, Suzhou, 215006, China}
\email{kuiwang@suda.edu.cn}

\author{Anqiang Zhu}
\address{School of Mathematics and Statistics, Wuhan University, Wuhan 430072, China}
\thanks{Corresponding author. Email: \href{mailto:aqzhu.math@whu.edu.cn}{aqzhu.math@whu.edu.cn} (A. Zhu).}
\email{aqzhu.math@whu.edu.cn}

\begin{abstract}
 We prove a sharp isoperimetric inequality for the harmonic mean of the first $n$ nonzero Neumann eigenvalues of the Witten-Laplacian on origin-symmetric Lipschitz domains in space forms, endowed with radial log-concave measures. The main novelty is that we establish the sharp harmonic mean inequality under general radial log-concave measures, without assuming the weight function  to be non-increasing. This extends previous results that were restricted to specific or more restrictive weighted settings. The proof relies on a refined analysis of the first eigenfunction on geodesic balls, a monotonicity property derived from a convexity condition on the radial weight, and a matrix trace inequality. The result recovers, as special cases, the Gaussian‑space inequalities of Chiacchio and Di Blasio \cite{CB12}, as well as those of Chiacchio \cite{Fr26} and Gao-Wang \cite{GW26}, while also improving recent work of Chen-Mao \cite{CM24}.

\end{abstract}

\subjclass[2020]{35P15, 35J05, 33A40}

\keywords{Neumann eigenvalues;
Isoperimetric inequality; Witten-Laplacians;
Log-concave density}

\maketitle

\section{Introduction}
The classical Szeg\"o-Weinberger inequality \cite{Sz54, Wei56} asserts that, among bounded domains of fixed volume in Euclidean space, the ball uniquely maximizes the first nonzero Neumann eigenvalue. This result has been extended to bounded domains in the hemisphere and in hyperbolic space \cite{AB95}, as summarized in Ashbaugh and Benguria’s foundational survey of open spectral conjectures \cite{As99}. For further  developments, we refer to \cite{AS96, BCB16,  LL23, Wang19} and references therein. A  related open conjecture posed by Ashbaugh and Benguria \cite{AB93} concerns the sum of reciprocals of low-order Neumann eigenvalues: for any bounded Lipschitz domain $\Omega\subset\mathbb{R}^n$,
\begin{align}\label{eq1.1}
\sum_{i=1}^n \frac{1}{\mu_i(\Omega)} \ge \frac{n}{\mu_1(B)},
\end{align}
where $B$ denotes a ball with the same volume as $\Omega$.
Xia and Wang \cite{XW23}  proved the sharp lower bound for the harmonic mean of the first $n-1$ nonzero Neumann eigenvalues in Euclidean and hyperbolic spaces; analogous results were derived for hemispheres \cite{BBC20}, rank-one symmetric spaces \cite{MW24}, and Witten-Laplacians associated with weighted volume forms \cite{CM24}. All these works confirm that the ball is the unique maximizer of $\mu_1$ and simultaneously minimizes the sum $\sum_{i=1}^{n-1}1/\mu_i$. More recently, He, Li and Tang \cite{HLT26} proved \eqref{eq1.1} for Euclidean space. See also \cite{YZ26} for space forms. 

Weighted spectral problems under Bakry-\'Emery measures (also called $\phi$-volume measures) provide a natural generalization of the classical Laplacian framework and include the Gaussian space as an  important example. The Gaussian measure $(2\pi)^{-m/2}e^{-|x|^2/2}dx$ corresponds to a convex weight potential $\phi$, and the associated Hermite operator $-\Delta + x\cdot\nabla$ replaces the standard Laplacian with a drift term induced by the Gaussian weight.  In the Gaussian setting,  Chiacchio and Di Blasio \cite{CB12} established a Szeg\"o-Weinberger type inequality: among origin-symmetric Lipschitz domains with the same Gaussian volume, the centered Euclidean ball uniquely maximizes the first nonzero Neumann eigenvalue. Building on their work, Gao and the second author \cite{GW26} extended this single-eigenvalue estimate to a sharp isoperimetric inequality governing the harmonic mean of the first $n-1$ nonzero Neumann eigenvalues in Gauss space. More recently, Chiacchio  \cite{Fr26} proved the corresponding result for the first $n$ nonzero Neumann eigenvalues in the Gaussian setting.

Despite the rich literature on Euclidean, constant-curvature, and Gaussian weighted spaces, a unified geometric framework covering general radial weighted manifolds $M^n$ remained absent. Recently,  Chen and Mao \cite{CM24} extended the result of Xia-Wang \cite{XW23} to space forms with log concave measure $(M,e^{-\phi(r(x))}d\V)$. However, their approach requires the weight function  $\phi(r)$ to be both nonincreasing and convex.  The present paper aims to remove the non-increasing conditions on $\phi(r)$ and to establish sharp isoperimetric inequalities for Neumann eigenvalues under general convex weight potentials $\phi(r)$. 

To state the main theorem, we introduce the following notation.  Let $M_\k$ denote the  $n$-dimensional complete simply connected Riemannian manifold  with constant sectional curvature $\kappa\in \{1,0,-1\}$, i.e., $M_1=\S^n$, $M_{0}=\R^n$ and $M_{-1}=\H^n$. Let $o$ be the origin of $M_\k$, and let $B_R$ denote the geodesic ball of radius $R$ centered at  $o$. Consider the radial weighted measure
$$
d\gamma_\k=e^{-\phi(r(x))}\, d\V_\k,
$$
where  $d\V_\k$  is the Riemannian volume element of $M_\k$  and $\phi$ is a smooth radial function centered at  $o$ and is smooth on $M_\k$. For a domain $\Omega\subset M_\k$, its volume is given by 
$$
|\Omega|_{\gamma_\k}=\int_\Omega d\gamma_\k=\int_\Omega e^{-\phi(r(x))}\, d\V_\k.
$$
Let $\mathcal{E}_\k$ denote the class of connected Lipschitz domains $\Omega\subset M_\k$  that are symmetric about $o$, such that $|\Omega|_{\gamma_\k}< |M_\k|_{\gamma_\k}$ when $\k=0$ or $\k=-1$, and such that $\Omega$ is contained in the $o$-centered geodesic ball $B_{\pi/2}$ when $\k=1$. Define the sine-type function $S_\k(r)$ as
\begin{align*}
    S_\kappa(r) =
\begin{cases}
\sin r, & \text{if } \kappa=1,\\
r, & \text{if } \kappa=0,\\
\sinh r, & \text{if } \kappa=-1,
\end{cases}\text{ and  $C_\kappa(r) = S_\kappa'(r)$.}
\end{align*}
We study the Neumann eigenvalue problem for the Witten-Laplacian  $\Delta_\phi:=\Delta-\nabla \phi \cdot \nabla$ on  $\Omega\in \mathcal{E}_\k$:
\begin{align}\label{1.2}
    \begin{cases}
        -\Delta u+\nabla u \cdot \nabla \phi=\mu u, &\quad \text{in $\Omega$},\\
        \dfrac{\p u}{\p \nu}=0, &\quad \text{on $\p \Omega$},
    \end{cases}
\end{align}
where $\p u/\p \nu$ denotes the outward normal derivative. The spectrum consists of discrete eigenvalues, denoted by $\mu_k(\Omega)$ for $k=0,1,\cdots$, satisfying
  \begin{align*}
       0=\mu_0(\Omega) < \mu_1(\Omega) \leq \mu_2(\Omega) \leq \cdots \to +\infty,
    \end{align*}
with each eigenvalue repeated according to multiplicity.

Our main theorem advances the theory of Ashbaugh–Benguria-type reciprocal sum inequalities to the general weighted manifold setting, under the additional monotonicity condition: $$\big(\frac{C_{\k}(r)}{S_{\k}(r)}\phi'(r)\big)'\ge0.$$
Precisely, we establish the following result.
\begin{theorem}\label{thm1.1}
With the notation above, let  $\Omega\in \mathcal{E}_\k$, and  let $B_R\subset M_\k$ be the geodesic ball of radius $R$ centered at  $o$ such that $|\Omega|_{\gamma_\k}=|B_R|_{\gamma_\k}$. Suppose $\phi(r)$ is convex and satisfies 
$$ \big(\frac{C_{\k}(r)}{S_{\k}(r)}\phi'(r)\big)'\geq 0, \quad 0<r<R.$$
 Then 
 \begin{align}\label{1.3}
\frac{1}{\mu_{1}(\Omega)} + \frac{1}{\mu_{2}(\Omega)} + \dots + \frac{1}{\mu_{n}(\Omega)} \ge \frac{n}{\mu_{1}\big(B_R\big)}.
\end{align}
Moreover, equality holds if and only if $\Omega$ coincides with the ball $B_R$. Consequently,
\begin{align}\label{1.4}
    \mu_1(\Omega)\le \mu_1(B_R).
\end{align}
\end{theorem}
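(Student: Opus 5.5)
We follow the Weinberger scheme, adapted to the weighted setting and combined with a trace argument for the harmonic mean. First, we analyze the first nonzero Neumann eigenvalue of $B_R$ for $\Delta_\phi$. Separating variables, $\mu_1(B_R)$ is attained by eigenfunctions of the form $g(r)\,x_i/r$, where $x_i/r$ are the restrictions to $\Sph^{n-1}$ of linear coordinate functions. Here $g$ solves
\begin{align*}
g''+\Big((n-1)\frac{C_\k}{S_\k}-\phi'\Big)g'-\frac{n-1}{S_\k^2}\,g+\mu_1(B_R)\,g=0,\qquad g(0)=0,\quad g'(R)=0 .
\end{align*}
We will show that $g'>0$ on $[0,R)$ by a Sturm comparison against the radial Dirichlet problem. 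We then extend $g$ to $[R,\infty)$ (or to $[R,\pi/2)$ when $\k=1$) by the constant $g(R)$. Using the test functions $P_i=g(r)x_i/r$ yields the variational characterization. Since $\Omega$ is symmetric about $o$ and $g(r)x_i/r$ is odd, each $P_i$ is automatically orthogonal to constants in $L^2(\Omega,d\gamma_\k)$. This is exactly where origin-symmetry replaces the usual Brouwer centering argument.

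Next comes the trace step for the harmonic mean. Let $u_1,\dots,u_n$ be orthonormal eigenfunctions for $\mu_1(\Omega),\dots,\mu_n(\Omega)$. Consider the $n\times n$ matrix $Q_{ij}=\int_\Omega P_i u_j\,d\gamma_\k$. After a rotation of coordinates (QR decomposition), we may assume $Q$ is lower triangular, so that $P_i\perp u_1,\dots,u_{i-1}$ and also $P_i\perp 1$. Then $\mu_i(\Omega)\int_\Omega P_i^2\le\int_\Omega|\nabla P_i|^2$. A standard matrix trace inequality in the spirit of Xia--Wang and Chiacchio \cite{Fr26} converts these $n$ inequalities into
\begin{align*}
\sum_{i=1}^n\frac{1}{\mu_i(\Omega)}\ \ge\ \frac{\sum_i\int_\Omega P_i^2\,d\gamma_\k\cdot(\ldots)}{\ldots},
\end{align*}
and ultimately into $\sum_i 1/\mu_i(\Omega)\ge n\int_\Omega g^2\,d\gamma_\k\big/\int_\Omega\big(g'^2+(n-1)g^2/S_\k^2\big)d\gamma_\k$. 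In practice we will use $\sum_i P_i^2=g^2$ and $\sum_i|\nabla P_i|^2=g'^2+(n-1)g^2/S_\k^2$, together with a Cauchy--Schwarz/AM--HM argument on the diagonal entries. This is the route that gives $n$ rather than $n-1$ terms, following \cite{Fr26}.

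It then remains to prove
\begin{align*}
\frac{\int_\Omega \big(g'^2+(n-1)g^2/S_\k^2\big)\,d\gamma_\k}{\int_\Omega g^2\,d\gamma_\k}\ \le\ \mu_1(B_R),
\end{align*}
with equality only for $\Omega=B_R$. We use mass transplantation: since $|\Omega|_{\gamma_\k}=|B_R|_{\gamma_\k}$, it suffices that $F(r)=g'^2+(n-1)g^2/S_\k^2$ is nonincreasing and $g^2$ is nondecreasing in $r$. The second follows from $g'\ge0$. For the first, on $r>R$ we have $F=(n-1)g(R)^2/S_\k^2$, which is decreasing (recall $r<\pi/2$ when $\k=1$). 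On $[0,R]$ we differentiate $F$ and substitute the ODE, obtaining
\begin{align*}
F'=2g'\Big(-\big((n-1)\tfrac{C_\k}{S_\k}-\phi'\big)g'+\tfrac{n-1}{S_\k^2}g-\mu_1 g\Big)+\big((n-1)g^2/S_\k^2\big)' .
\end{align*}

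This monotonicity is the main obstacle. Without $\phi'\le0$, the term $\phi' g'^2$ has the wrong sign. We will try to control it by introducing $h=g'S_\k/g$ (or comparing $g'$ with $gC_\k/S_\k$) and deriving a Riccati-type inequality for $h$. The hypothesis $\big(\frac{C_\k}{S_\k}\phi'\big)'\ge0$ together with convexity $\phi''\ge0$ should propagate an inequality of the form $g'\le \frac{C_\k}{S_\k}g$ and $\mu_1 g\ge \phi' g'+\cdots$ from $r=0$, where both sides agree to leading order, out to $r=R$. The likely mechanism is a maximum-principle argument applied to $q=\frac{C_\k}{S_\k}g-g'$, whose ODE contains the source term $\big(\frac{C_\k}{S_\k}\phi'\big)'g$. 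Once $F'\le0$ is established, the transplantation inequality becomes strict unless $\Omega=B_R$ up to measure zero, which gives the equality case. Finally, \eqref{1.4} follows from $n/\mu_1(\Omega)\ge\sum_i 1/\mu_i(\Omega)\ge n/\mu_1(B_R)$.
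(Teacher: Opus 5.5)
Your proposal has a genuine gap at the trace step, which is the heart of the $n$-eigenvalue problem. After the QR rotation you correctly get $\mu_i(\Omega)J_{ii}\le K_{ii}$, with $J_{ii}=\int_\Omega P_i^2\,d\gamma_\k$ and $K_{ii}=\int_\Omega|\nabla P_i|^2\,d\gamma_\k$, hence $\sum_i 1/\mu_i(\Omega)\ge\sum_i J_{ii}/K_{ii}$. Nothing turns this into $n\sum_iJ_{ii}/\sum_iK_{ii}=n\int_\Omega g^2\,d\gamma_\k/\int_\Omega F\,d\gamma_\k$. Cauchy--Schwarz only gives $(\sum_i\sqrt{J_{ii}})^2/\sum_iK_{ii}$, and $(\sum_i\sqrt{J_{ii}})^2\le n\sum_iJ_{ii}$ goes the wrong way. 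Likewise $\tr(K^{-1}J)\ge n\tr J/\tr K$ is false for general positive matrices: for $n=2$, $J=\mathrm{diag}(1,\epsilon^2)$ and $K=\mathrm{diag}(1,\epsilon)$ give $1+\epsilon$ against $2(1+\epsilon^2)/(1+\epsilon)$. The QR frame is dictated by the eigenfunctions of $\Omega$, so you have no control over how $J_{ii},K_{ii}$ vary with $i$. Collapsing everything to the traces $\int_\Omega g^2$ and $\int_\Omega F$ throws away exactly the information needed.

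The paper keeps the direction dependence instead. Applying the bathtub principle on each radial slice $E_\theta$ gives $J\succeq aI+cZ$ and $K\preceq\mu_1(B_R)aI-dZ$ with one and the same traceless matrix $Z=\int_{\S^{n-1}}(Y(\theta)-Y_R)\psi\psi^\top d\theta$. In other words, the shape of $\Omega$ enlarges $J$ and shrinks $K$ in the same directions. Hersch's bound $\sum_i1/\mu_i(\Omega)\ge\tr(K^{-1}J)$ and the He--Li--Tang inequality (Lemma~\ref{lm2.1}) then finish the proof. With these bounds your QR frame would also work: $J_{ii}/K_{ii}\ge f(Z_{ii})$ with $f(z)=(a+cz)/(\mu_1(B_R)a-dz)$ convex and $\sum_iZ_{ii}=0$, so Jensen gives $n/\mu_1(B_R)$.

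Your second obstacle, monotonicity of $F=g'^2+(n-1)g^2/S_\k^2$, is left unproved, and it is not needed. Treat the two pieces of $F$ separately. Since the extended $g$ has $g'=0$ beyond $R$, the radial part satisfies $\int_{E_\theta}g'^2S_\k^{n-1}e^{-\phi}\,dr\le Q_R$ trivially. The angular part needs only that $g/S_\k$ is nonincreasing, which is exactly what your maximum-principle idea delivers. This is the paper's Lemma~\ref{lm3.3}: $w=g'/g-C_\k/S_\k$ obeys an ODE with source term $(\frac{C_\k}{S_\k}\phi')'\ge0$, so it has no positive interior maximum, while $w\to0$ at $0$ and $w(R)<0$. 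So you can drop the $F'\le0$ computation entirely.

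Finally, you simply assert that $\mu_1(B_R)$ is realized by the $k=1$ angular mode. This needs proof, since otherwise your test functions are compared against the wrong eigenvalue. The paper proves it by differentiating the radial Neumann equation and using $\phi''\ge0$ to show that the second purely radial eigenvalue exceeds $\lambda_1$.
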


\begin{remark}
    Since $\mu_1(\Omega)\le\mu_2(\Omega)\le\cdots\le \mu_n(\Omega)$, inequality \eqref{1.3} directly implies
    \begin{align*}
    \frac{n}{\mu_{1}\big(B_R\big)}\le    \sum_{i=1}^n \frac{1}{\mu_{i}(\Omega)}\le \frac{n}{\mu_{1}(\Omega)}, 
    \end{align*}
  from  which \eqref{1.4} holds. Thus we obtain a new Szeg\"o-Weinberger inequality, which in particular recovers the Gaussian weighted space result of Chiacchio and Di Blasio \cite{CB12} as a special case.
\end{remark}

Theorem \ref{thm1.1} extends the recent Gaussian space results of  Chiacchio \cite{Fr26} and Gao-Wang \cite{GW26}, and improves the results of Chen-Mao \cite{CM24}. In \cite{CM24}, the function $\phi(r)$ is assumed to be a non-increasing convex function, a condition not satisfied by the Gaussian measure. In fact, the conditions in Theorem \ref{thm1.1} are  met by a wide range of examples; for instance, $\R^{n}$ with $\phi(r)=r^{2k}$ for $k\ge 1$. 

The novelty of our approach lies in both the scope of the result and the methodology.  The main technical difficulty, however, resides in the weighted setting: one must establish, under general log-concave measures, the precise structure and monotonicity properties of the first eigenfunctions on geodesic balls.  We adopt a different strategy from that of Chiacchio and Di Blasio \cite{CB12}  to characterize the first nonzero eigenfunction of the Witten-Laplacian on balls. This approach, presented in Section \ref{sect3}, is relatively straightforward and, more importantly, applies to a broad class of convex radial weights, without requiring the monotonicity of $\phi$ as in \cite{CM24}
  or the special structure of the Gaussian density.  Consequently, our theorem yields a sharp Szeg\"o-Weinberger   inequality for general radial log-concave measures on space forms, thereby extending the Gaussian space results in \cite{Fr26, GW26} and providing a unified treatment that was not previously available in the literature.

The structure of this paper is organized as follows. Section \ref{sect2} collects the necessary preliminaries, including the weighted manifold setting, the Neumann eigenvalue problem for the Witten-Laplacian, and several auxiliary lemmas.  Section \ref{sect3}
is devoted to the study of the Neumann eigenvalue problem on geodesic balls, including the precise form and multiplicity of the first nonzero eigenfunctions, and  key monotonicity properties. In Section  \ref{sect4}, we prove Theorem \ref{thm1.1}.

\section{Preliminaries}\label{sect2}
We begin by recalling some standard facts about geodesic polar coordinates on the space form $M_\k$. In geodesic polar coordinates $(r, \theta)$ centered at the origin $o$, the  Riemannian metric is $$g_\k=dr^{2}+S_{\kappa}^{2}(r)g_{\S^{n-1}},$$
where $g_{\S^{n-1}}$ denotes the standard metric on the unit sphere $\S^{n-1}$ and $r(x)$ is the geodesic distance from $o$ to $x$. The  corresponding volume element is given by
$$d\V_\k=S_{\kappa}(r)^{n-1}drd\theta,$$
with $d\theta$ denotes the volume element on $\S^{n-1}$.
The Laplace-Beltrami operator $\Delta$  takes the explicit form
\begin{align*}
    \Delta=\frac{1}{S_\k(r)^{n-1}}\frac{\partial }{\partial r}\left(S_\k(r)^{n-1}\frac{\p}{\p r}\right)+\frac{1}{S_\k(r)^2}\Delta_{\S^{n-1}}
\end{align*}
For a smooth radial function $\phi(r(x))$ defined on $M_{\kappa}$, 
the Witten-Laplacian $ \Delta_{\phi}$ is defined by
\begin{align*}
    \Delta_{\phi}=\Delta-\nabla\phi\cdot \nabla=\frac{1}{S_\k(r)^{n-1}}\frac{\partial}{\partial r}\left(S_\k(r)^{n-1}\frac{\p}{\p r}\right)+\frac{1}{S_\k(r)^2}\Delta_{\S^{n-1}}-\phi'(r)\frac{\p}{\p r}.
\end{align*}
Throughout this paper, $\Omega \subset M_\k$ is assumed to be a  connected Lipschitz domian that is symmetric about the origin,  such that $|\Omega|_{\gamma_\k}< |M_\k|_{\gamma_\k}$ when $\k=0$ or $\k=-1$, and such that $\Omega$ is contained in $B_{\pi/2}$ when $\k=1$. Furthermore, we assume that the compact embedding
\begin{equation}\label{eq:compact-embedding}
H^1(\Omega,\gamma_{\k})\hookrightarrow L^2(\Omega,\gamma_{\k})
\quad\text{compactly}
\end{equation}
holds; see, for instance,  \cite[Definition 2.1 and Remark 2.1]{CG22} for  further details.

 For a matrix $A$, we denote by $\tr(A)$ its trace, and we let $I$ denote the  $n\times n$ identity matrix throughout the paper.
 For real symmetric matrices $A$ and $B$, we write $A\succeq B$ (respectively, $A\preceq B$)   to indicate that $A-B$ is  positive semidefinite (respectively, negative semidefinite), and $A\succ B$ (respectively, $A\prec B$)   to indicate that $A-B$ is  positive definite (respectively, negative definite).
We first recall  a matrix inequality from \cite{HLT26}, which will serve as a useful tool in the proof of Theorem \ref{thm1.1}. 

\begin{lemma}[\cite{HLT26}]\label{lm2.1}
Let $a,c,d,\lambda>0$, and let $Z$ be a real symmetric $n\times n$
matrix with $\operatorname{tr} Z=0$. Suppose  $J$ and $K$ are real symmetric
matrices such that
\begin{align*}
        J\succeq0,
        \qquad
        J\succeq aI+cZ,
        \qquad
        0\prec K\preceq \lambda aI-dZ.
\end{align*}
Then
\begin{equation}\label{eq:matrix-goal}
        \tr(K^{-1}J)\geq \frac{n}{\lambda}.
\end{equation}
Moreover, if equality holds in \eqref{eq:matrix-goal}, then $Z=0$,
$J=aI$, and $K=\lambda aI$. 
\end{lemma}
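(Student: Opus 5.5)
The plan is to pass from $K^{-1}$ to an explicit matrix that commutes with $Z$, then to reduce the trace to a scalar sum over the eigenvalues of $Z$ and finish with Jensen's inequality. Set $M:=\lambda aI-dZ$. Since $0\prec K\preceq M$, the matrix $M$ is positive definite.

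\textbf{Step 1: replace $K^{-1}$ by $M^{-1}$.} Matrix inversion is operator monotone decreasing on positive definite matrices, so $K^{-1}\succeq M^{-1}\succ 0$. Because $J\succeq 0$, we get
\begin{align*}
\tr(K^{-1}J)-\tr(M^{-1}J)=\tr\big(J^{1/2}(K^{-1}-M^{-1})J^{1/2}\big)\ge 0 .
\end{align*}

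\textbf{Step 2: replace $J$ by $aI+cZ$.} Since $M^{-1}\succ0$ and $J-(aI+cZ)\succeq0$, the same argument gives
\begin{align*}
\tr(M^{-1}J)\ge \tr\big(M^{-1}(aI+cZ)\big).
\end{align*}

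\textbf{Step 3: reduce to a scalar inequality.} The matrices $M$ and $Z$ commute, so I diagonalize $Z$ with eigenvalues $z_1,\dots,z_n$. These satisfy $\sum_i z_i=\tr Z=0$, and $\lambda a-dz_i>0$ for each $i$ because $M\succ0$. Then
\begin{align*}
\tr\big(M^{-1}(aI+cZ)\big)=\sum_{i=1}^n f(z_i),\qquad f(z):=\frac{a+cz}{\lambda a-dz}=-\frac{c}{d}+\frac{a+c\lambda a/d}{\lambda a-dz}.
\end{align*}
The coefficient $a+c\lambda a/d$ is positive, so $f$ is strictly convex on the interval $z<\lambda a/d$. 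Jensen's inequality then gives
\begin{align*}
\sum_{i=1}^n f(z_i)\ge nf\Big(\frac1n\sum_{i=1}^n z_i\Big)=nf(0)=\frac{n}{\lambda},
\end{align*}
which proves \eqref{eq:matrix-goal}.

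\textbf{Step 4: equality case.} This is the only step requiring care. If equality holds in \eqref{eq:matrix-goal}, every inequality in the chain is an equality.
\begin{itemize}
\item Equality in Jensen with $f$ strictly convex forces all $z_i=0$. Hence $Z=0$ and $M=\lambda aI$.
\item Equality in Step 2 now reads $\tr(J-aI)=0$. Since $J-aI\succeq0$, this forces $J=aI$.
\item Equality in Step 1 then becomes $a\,\tr\big(K^{-1}-(\lambda a)^{-1}I\big)=0$. Since $K^{-1}-(\lambda a)^{-1}I\succeq0$ and $a>0$, this forces $K^{-1}=(\lambda a)^{-1}I$, i.e. $K=\lambda aI$.
\end{itemize}

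The main points to check are the operator monotonicity of inversion in Step 1 (standard) and the strict convexity of $f$, which relies on $a,c,d,\lambda>0$.
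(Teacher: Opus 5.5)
Your proof is correct and complete, including the equality case. Operator monotonicity of inversion, together with $J\succeq 0$, lets you replace $K^{-1}$ by $(\lambda aI-dZ)^{-1}$. Positivity of that inverse then lets you replace $J$ by $aI+cZ$. Since everything now commutes with $Z$, the trace becomes $\sum_i f(z_i)$ with $f(z)=(a+cz)/(\lambda a-dz)$, which is strictly convex on $z<\lambda a/d$. Jensen's inequality with $\sum_i z_i=0$ then gives the bound $nf(0)=n/\lambda$, and in the equality case it forces $Z=0$, hence $J=aI$ and $K=\lambda aI$.

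The paper does not prove this lemma; it quotes it from \cite{HLT26}, so there is no internal proof to compare against. Your chain of three inequalities is a self-contained proof. It also shows why the separate hypothesis $J\succeq 0$ is needed: it is used only in the first step, because $aI+cZ$ itself need not be positive semidefinite.
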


Next, we state a standard trace formulation of Hersch’s variational principle for reciprocal sums of Neumann eigenvalues. We refer to \cite{HerschCRAS} for the original result, and refer to \cite{Fr26, Hile&Xu}  for related extensions and discussions.

\begin{lemma}\label{lm2.2}
Let $\Omega\in \mathcal{E}_\k$. Denote $\mu_m(\Omega)$ be the $m$-th Neumann eigenvalue of \eqref{1.2}.
Suppose
$P_1, \ldots, P_n\in H^1(\Omega,\gamma_{\kappa})$ are linearly independent in
$L^2(\Omega,\gamma_{\k})$ satisfying
\begin{align*}
        \int_\Omega P_i\,d\gamma_{\kappa}=0,
        \qquad i=1,\ldots,n.
\end{align*}
Define the matrices $J=(J_{ij})$ and $K=(K_{ij})$ by
\begin{align*}
            J_{ij}=\int_\Omega P_iP_j\,d\gamma_{\kappa},
        \qquad
        K_{ij}=\int_\Omega \langle \nabla P_i, \nabla P_j\rangle_{M_\k}\,d\gamma_{\kappa}.
\end{align*}
If $K$ is positive definite, then
\begin{align*}
          \sum_{i=1}^n \frac1{\mu_i(\Omega)}\geq \tr(K^{-1}J).
\end{align*}
Where   $\langle \cdot, \cdot\rangle_{M_{\k}}$  denotes the inner product with respect to the metric of $M_\k$.
\end{lemma}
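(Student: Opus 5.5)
The plan is to reduce the statement to the Courant--Fischer min-max characterization of the Neumann eigenvalues by a simultaneous diagonalization of the two Gram matrices $J$ and $K$. First, $J$ is positive definite: for $c\in\R^n\setminus\{0\}$,
$$c^TJc=\int_\Omega\Big(\sum_i c_iP_i\Big)^2d\gamma_\k,$$
which is strictly positive because the $P_i$ are linearly independent in $L^2(\Omega,\gamma_\k)$. By hypothesis $K$ is positive definite as well.

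Next, we observe that $\tr(K^{-1}J)$ is invariant under a change of basis $Q=AP$ with $A\in GL(n,\R)$. Under such a change, $J\mapsto AJA^T$ and $K\mapsto AKA^T$, so
$$\tr\big((AKA^T)^{-1}AJA^T\big)=\tr\big(A^{-T}K^{-1}JA^{T}\big)=\tr(K^{-1}J).$$
Since $J\succ0$ and $K$ is symmetric, the generalized eigenvalue problem $Kv=kJv$ gives an $A$ with $AJA^T=I$ and $AKA^T=\operatorname{diag}(k_1,\dots,k_n)$, where $0<k_1\le\cdots\le k_n$. The new functions $Q_i=\sum_j A_{ij}P_j$ then satisfy the following:
\begin{itemize}
\item they lie in $H^1(\Omega,\gamma_\k)$ and have zero $\gamma_\k$-mean;
\item they are orthonormal in $L^2(\Omega,\gamma_\k)$;
\item they are mutually orthogonal in the weighted Dirichlet form, with $\int_\Omega|\nabla Q_i|^2d\gamma_\k=k_i$.
\end{itemize}
Consequently $\tr(K^{-1}J)=\sum_{i=1}^n 1/k_i$, and it suffices to prove $\mu_i(\Omega)\le k_i$ for each $i$.

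For this we use the variational characterization of the Neumann spectrum of $-\Delta_\phi$. The operator is self-adjoint on $L^2(\Omega,\gamma_\k)$ with quadratic form $\int_\Omega|\nabla u|^2d\gamma_\k$ on $H^1(\Omega,\gamma_\k)$, and the compact embedding \eqref{eq:compact-embedding} guarantees a discrete spectrum. The ground state $\mu_0=0$ has the constants as eigenfunctions, so
$$\mu_i(\Omega)=\min_{\substack{V\subset H^1,\ \dim V=i\\ V\perp 1}}\ \max_{u\in V\setminus\{0\}}\frac{\int_\Omega|\nabla u|^2d\gamma_\k}{\int_\Omega u^2d\gamma_\k}.$$
Take $V_i=\operatorname{span}\{Q_1,\dots,Q_i\}$, which is $i$-dimensional and orthogonal to constants. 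For $u=\sum_{j\le i}c_jQ_j$, the two orthogonality relations give the Rayleigh quotient
$$\frac{\sum_{j\le i}c_j^2k_j}{\sum_{j\le i}c_j^2}\le k_i.$$
Hence $\mu_i(\Omega)\le k_i$, and summing the reciprocals yields
$$\sum_{i=1}^n\frac{1}{\mu_i(\Omega)}\ge\sum_{i=1}^n\frac1{k_i}=\tr(K^{-1}J).$$

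The only delicate point is the justification of the min-max formula in this weighted Lipschitz setting. We expect this to be the main obstacle, though only a mild one. It requires identifying $H^1(\Omega,\gamma_\k)$ as the form domain of the weak Neumann problem \eqref{1.2}, and the assumed compact embedding \eqref{eq:compact-embedding} to obtain an orthonormal basis of eigenfunctions. Once that is in place, the standard Courant--Fischer argument applies verbatim.
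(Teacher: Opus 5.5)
Your proof is correct. The paper gives no argument of its own for this lemma: it quotes it as the trace form of Hersch's variational principle and defers to the cited literature. The route implicit there is the dual, Ky Fan type, characterization
\[
\sum_{i=1}^n\frac{1}{\mu_i(\Omega)}=\max\Big\{\sum_{i=1}^n\int_\Omega u_i^2\,d\gamma_\k\Big\}.
\]
The maximum is taken over mean-zero $u_1,\dots,u_n\in H^1(\Omega,\gamma_\k)$ with $\int_\Omega\langle\nabla u_i,\nabla u_j\rangle_{M_\k}\,d\gamma_\k=\delta_{ij}$. This is Ky Fan's maximum principle applied to the compact operator representing the $L^2(\Omega,\gamma_\k)$ form on the mean-zero part of $H^1$ equipped with the Dirichlet inner product. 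The lemma then follows in one line by inserting $(u_i)=K^{-1/2}(P_i)$, since then $\sum_i\int_\Omega u_i^2\,d\gamma_\k=\tr(K^{-1/2}JK^{-1/2})=\tr(K^{-1}J)$.

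You instead diagonalize the pencil $(K,J)$ simultaneously. You then apply the ordinary Courant--Fischer principle to each $\mu_i$ separately, on the nested spans of $Q_1,\dots,Q_i$. This avoids Ky Fan's trace principle altogether. It also proves the stronger termwise bounds $\mu_i(\Omega)\le k_i$ for $1\le i\le n$, where $k_1\le\cdots\le k_n$ are the generalized eigenvalues of $Kv=kJv$. The Hersch formulation, by contrast, is an identity whose maximum is attained by the first $n$ eigenfunctions, which is convenient when discussing equality. It is also a one-line consequence once that principle is available. Both routes rest on the same functional-analytic input you flagged, namely discreteness of the spectrum via \eqref{eq:compact-embedding}.

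A minor remark: deriving $J\succ0$ from linear independence is legitimate, but that hypothesis is actually implied by $K\succ0$. If $\sum_i c_iP_i=0$, then its weak gradient vanishes, so $c^{T}Kc=0$ and hence $c=0$.
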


Finally, we recall a weighted one-dimensional version of the bathtub principle, which will be used in the proof of the main theorem. For the classical formulation and related results, we refer the reader to \cite[Theorem~1.14]{LiebLoss}, \cite[Lemma~3.2]{HLT26} or \cite[~Lemma 4.1]{Fr26}.

\begin{lemma}\label{lm2.3}
Let $0<L\leq\infty$, and let $\rho\geq0$ be locally integrable on $[0,L)$, and assume $\rho>0$ almost everywhere in $(0,L)$. Set
\begin{align*}
d\mu=\rho(r)\,dr,
\qquad
V(t)=\mu((0,t)),
\qquad
Y_*=\mu((0,L)).
\end{align*}
Then $V$ is continuous and strictly increasing on $[0,L)$. For $0\leq y<Y_*$, let $r_y=V^{-1}(y)$. If $Y_*<\infty$, we also allow $y=Y_*$ and set $r_{Y_*}=L$, with the convention that $(0,L)=(0,\infty)$ when $L=\infty$. Let $E\subset(0,L)$ be measurable with $\mu(E)=y$. Then the following hold:
\begin{enumerate}
\item If $w$ is nondecreasing on $[0,L)$, then
\begin{align*}
\int_E w\,d\mu\geq\int_0^{r_y}w\,d\mu;
\end{align*}
\item If $w$ is nonincreasing on $[0,L)$, then
\begin{align*}
\int_E w\,d\mu\leq\int_0^{r_y}w\,d\mu.
\end{align*}
\end{enumerate}
In particular, if $0<y<Y_*$ and $w$ is strictly decreasing, equality in the second inequality holds if and only if $E=(0,r_y)$ up to a $\mu$-null set.
\end{lemma}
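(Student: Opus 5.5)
The statement just before the cut is Lemma~\ref{lm2.3}, the weighted bathtub principle, and this is the result I will prove.

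\emph{Properties of $V$.} Since $\rho$ is locally integrable, $V(t)=\int_0^t\rho\,dr$ is absolutely continuous, hence continuous, on $[0,L)$. For $s<t$ we have $V(t)-V(s)=\int_s^t\rho\,dr>0$, because $\rho>0$ almost everywhere on $(s,t)$, which has positive Lebesgue measure. So $V$ is strictly increasing and maps $[0,L)$ bijectively onto $[0,Y_*)$. Therefore $r_y=V^{-1}(y)$ is well defined for $0\le y<Y_*$, and the convention $r_{Y_*}=L$ covers the endpoint case.

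\emph{Main comparison.} Put $F=(0,r_y)$, so $\mu(F)=y=\mu(E)$. The key step is the pointwise inequality
\[
(w(r)-w(r_y))\,(\chi_E-\chi_F)(r)\ \ge 0 \quad\text{for } r\in(0,L),
\]
valid when $w$ is nondecreasing, with $w(r_y)$ understood as any finite threshold between $\sup_{F}w$ and $\inf_{(r_y,L)}w$. To check it:
\begin{itemize}
\item on $E\setminus F$ we have $r\ge r_y$, so $w(r)\ge w(r_y)$;
\item on $F\setminus E$ we have $r<r_y$, so $w(r)\le w(r_y)$, and the factor $\chi_E-\chi_F=-1$ makes the product nonnegative as well.
\end{itemize}
Integrating against $d\mu$ and using $\int(\chi_E-\chi_F)\,d\mu=\mu(E)-\mu(F)=0$ gives $\int_E w\,d\mu\ge\int_F w\,d\mu$.

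\emph{Integrability.} I will first treat the case where $w$ is integrable on $E\cup F$, so that the subtraction above is legitimate. If $\int_E w\,d\mu=+\infty$, assertion (1) holds trivially. The boundary case $y=Y_*$ is immediate, since then $E=(0,L)$ up to a $\mu$-null set.

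\emph{Assertion (2) and the equality case.} Assertion (2) follows by applying (1) to $-w$. For equality when $w$ is strictly decreasing and $0<y<Y_*$, the integrand $(w(r_y)-w(r))(\chi_E-\chi_F)$ is nonnegative and vanishes only where $\chi_E=\chi_F$, because strict monotonicity gives $w(r)\neq w(r_y)$ for $r\neq r_y$. Equality forces this integrand to vanish $\mu$-almost everywhere, so $E=F$ up to a $\mu$-null set.

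\emph{Expected obstacle.} The only delicate point is justifying the subtraction when $w$ is not integrable, for instance when $w$ is unbounded near $L=\infty$. I would handle this by truncating $w$ at level $\pm N$, which preserves monotonicity, applying the argument to the truncation, and passing to the limit $N\to\infty$ by monotone convergence.
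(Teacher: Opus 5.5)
Your proof is correct and follows essentially the same route as the paper, which gives no proof of Lemma~\ref{lm2.3} and only cites \cite[Theorem~1.14]{LiebLoss}, \cite[Lemma~3.2]{HLT26} and \cite[Lemma~4.1]{Fr26}; your threshold inequality $(w-w(r_y))(\chi_E-\chi_{(0,r_y)})\ge 0$ is the standard bathtub argument behind those references. The integrability issue you flag does not actually arise: for nondecreasing $w$ and $r_y<L$ one has $w(0)\le w\le w(r_y)$ on $(0,r_y)$, so $\int_0^{r_y}w\,d\mu$ is finite and $\int_E w\,d\mu\in(-\infty,+\infty]$ is well defined, hence no truncation is needed.
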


\section{Neumann eigenvalue problem on geodesic balls}\label{sect3}
In this section, we establish several properties of the first nonzero Neumann eigenvalue and its associated eigenfunctions on geodesic balls. Let $M_\k$  be the $n$ dimensional  complete simply connected Riemannian manifold of constant sectional curvature $\kappa\in\{-1,0,1\}$, and let  $B_R$ denote the origin-centered ball with radius $R$ in $M_\k$. The Neumann eigenvalue problem of \eqref{1.2} on $B_R$ reads
\begin{align}\label{3.1}
   \begin{cases}
       -\Delta u+\nabla u \cdot \nabla \phi=\mu u, \quad& \text{in $B_R$,}\\
       \frac{\p u}{\p \nu}=0, \quad& \text{on $\p B_R$,}
   \end{cases} 
\end{align}
where $\nu$ denotes the unit outward normal on  $\p B_R$.

\begin{lemma}\label{lm3.1}
Let $B_R\subset M_\k^n$ be the origin-centered ball of radius $R$ with $R<\pi$ if $\k=1$. Suppose $\phi$ is  radial and convex. Then the first nonzero eigenvalue  of \eqref{3.1} has muliplicity $n$, and all the corresponding eigenfunctions are of the form
$$
u(x)=T(r)\psi_i(\theta), \quad i=1,2, \cdots, n,
$$
where $(r, \theta)$ are polar coordinates, $\psi_i(\theta)$ are the linear coordinate functions restricted to $\mathbb{S}^{n-1}$, and 
$T(r)$ is the first eigenfunction of the boundary value problem
\begin{align}\label{3.2}
    \begin{cases}
T'' + \left((n-1)\frac{C_\kappa}{S_\kappa} - \phi'\right)T' + \big(\lambda - (n-1)S_\kappa^{-2}\big)T = 0,\quad r\in (0, R),\\
T(0) = 0,\quad T'(R) = 0. 
\end{cases}
\end{align}
\end{lemma}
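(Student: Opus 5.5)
Since $B_R$ and $\phi$ are rotationally symmetric, I would separate variables. Expand any eigenfunction of \eqref{3.1} in spherical harmonics, $u(r,\theta)=\sum_{k\ge0}\sum_j T_{k,j}(r)Y_{k,j}(\theta)$, where $\Delta_{\S^{n-1}}Y_{k,j}=-k(k+n-2)Y_{k,j}$. Each radial coefficient then solves the one-dimensional Sturm--Liouville problem
\begin{align*}
(S_\k^{n-1}e^{-\phi}T')'+\big(\lambda-k(k+n-2)S_\k^{-2}\big)S_\k^{n-1}e^{-\phi}T=0,
\end{align*}
with $T'(R)=0$. At $r=0$ we impose boundedness for $k=0$ and $T(0)=0$ for $k\ge1$. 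The Neumann spectrum of $B_R$ is the union over $k$ of these spectra $\{\lambda_{k,m}\}_{m\ge1}$, and $\lambda_{k,m}$ has multiplicity equal to $\dim\mathcal H_k$. By the Rayleigh quotient (the potential term is increasing in $k$) we have $\lambda_{k,1}<\lambda_{k+1,1}$ for $k\ge1$. So the first nonzero eigenvalue is $\min\{\lambda_{0,2},\lambda_{1,1}\}$. The claim then reduces to proving the strict inequality $\lambda_{1,1}<\lambda_{0,2}$. Once that holds, the eigenspace is exactly $\{T(r)\psi_i\}$ with $T$ the first eigenfunction of \eqref{3.2}, and its dimension is $\dim\mathcal H_1=n$.

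The main obstacle is the comparison $\lambda_{1,1}<\lambda_{0,2}$, and this is where convexity of $\phi$ enters. Let $v$ be the second radial eigenfunction, with $k=0$ and eigenvalue $\nu=\lambda_{0,2}$. It satisfies $v''+((n-1)C_\k/S_\k-\phi')v'+\nu v=0$, $v'(0)=0$, $v'(R)=0$. I would differentiate this equation and set $w=v'$. Using $(C_\k/S_\k)'=-S_\k^{-2}$, this gives
\begin{align*}
w''+\Big((n-1)\frac{C_\k}{S_\k}-\phi'\Big)w'+\Big(\nu-(n-1)S_\k^{-2}-\phi''\Big)w=0,
\end{align*}
with $w(0)=0$ and $w(R)=0$. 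Now use $w$, equivalently the function $w(r)\psi_i(\theta)$, as a test function in the Rayleigh quotient for $\lambda_{1,1}$. Multiply the equation by $S_\k^{n-1}e^{-\phi}w$ and integrate by parts; the boundary terms vanish since $w(0)=w(R)=0$. This gives
\begin{align*}
\int_0^R\big(w'^2+(n-1)S_\k^{-2}w^2\big)d\sigma=\int_0^R(\nu-\phi'')w^2\,d\sigma,\qquad d\sigma=S_\k^{n-1}e^{-\phi}dr.
\end{align*}
Because $\phi''\ge0$, this yields $\lambda_{1,1}\le\nu$. For $\k=1$, $R<\pi$ keeps $S_\k>0$.

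To get strictness, suppose $\lambda_{1,1}=\nu$. Then $w$ would be a minimizer, hence an eigenfunction of \eqref{3.2}, and comparing with the equation above forces $\phi''w^2\equiv0$. In that case $w$ satisfies both ODEs, so $\phi''\equiv0$ on the open set where $w\ne0$, which is dense. Then $\phi'$ is constant; smoothness of the radial $\phi$ at $o$ forces $\phi'(0)=0$, so $\phi$ is constant, i.e.\ we are in the unweighted case. The classical fact $\lambda_{1,1}<\lambda_{0,2}$ for balls in space forms then handles this case. To get a uniform argument instead, I would use a Sturm comparison, exploiting that $w=v'$ has a Dirichlet condition at $R$ rather than Neumann and therefore cannot be the ground state of \eqref{3.2} (which has $T'(R)=0$). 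Precisely: if $w$ attained $\lambda_{1,1}$ it would be an eigenfunction of \eqref{3.2}, so $w'(R)=0$ together with $w(R)=0$. The ODE would then force $w\equiv0$, a contradiction. So the inequality is strict, and this argument covers the unweighted case as well.

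The remaining steps are routine. First, the regularity of the separated solutions at $r=0$ and $r=R$ (with $S_\k(R)>0$, using $R<\pi$ when $\k=1$) justifies the integrations by parts. Second, the first eigenfunction $T$ of \eqref{3.2} is simple and can be chosen positive; this follows from standard Sturm--Liouville theory on $(0,R]$ with the singular endpoint at $0$. Together these give both the form $u=T(r)\psi_i(\theta)$ and the multiplicity exactly $n$.
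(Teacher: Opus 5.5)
Your proof is correct, and its core idea matches the paper's. You differentiate the second radial Neumann eigenfunction $f$ (your $v$) of the $k=0$ problem and observe that $h=f'$ vanishes at $0$ and $R$. You note that $h$ satisfies the $k=1$ equation up to the extra term $-\phi'' h$, and then use convexity to compare with $\lambda_{1,1}$.

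Where you differ is in how the comparison is concluded.
\begin{itemize}
\item \emph{The paper's route.} It uses a Wronskian/Picone-type identity: multiply the differential inequality for $h$ by the positive ground state $g$ of \eqref{3.5}, integrate, and obtain $(\tau_2-\lambda_1)\int_0^R hgp\,dr\le -h'(R)g(R)p(R)<0$. This needs sign information throughout.
  \begin{itemize}
  \item $f$ must change sign exactly once, so that $h=f'<0$ on $(0,R)$; this is what turns convexity into the one-sided inequality \eqref{3.8}.
  \item $g>0$ is needed to multiply the inequality and to fix the sign of $\int_0^R hgp\,dr$.
  \item Strictness needs $h'(R)=-\tau_2 f(R)>0$.
  \end{itemize}
\item \emph{Your route.} You insert $h$ as a test function in the Rayleigh quotient for $\lambda_{1,1}$. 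Convexity enters only through $\int_0^R\phi'' h^2\,d\sigma\ge 0$, with no sign condition on $h$. Strictness comes from a mismatch of boundary conditions: $h(R)=0$, whereas a minimizer must satisfy the natural condition $h'(R)=0$. Uniqueness for the ODE at the regular endpoint $R$ (where $S_\kappa(R)>0$) then forces $h\equiv 0$.
\end{itemize}
Your version is shorter and avoids the nodal analysis entirely. Once you have the boundary-condition argument, your detour through $\phi''\equiv 0$ and the classical unweighted case is unnecessary.

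Your write-up also fills two points the paper passes over. First, the spherical-harmonic expansion shows the separated solutions are complete, which is what justifies that \emph{all} first eigenfunctions have the stated form. Second, the strict monotonicity $\lambda_{k,1}<\lambda_{k+1,1}$ for $k\ge 1$ is needed to rule out the modes with $k\ge 2$; the paper asserts without justification that only $\tau_2$ and $\lambda_1$ are candidates.
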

\begin{proof}
Since the  metric on $B_R$ admits  the polar decomposition $$g_\k=dr^2+S_\k(r)^2 g_{\S^{n-1}},$$  and $\phi$ is radial symmetric, we may apply the separation of variables
technique to find solutions of \eqref{3.1}. 
Let $u(x)=T(r)\psi(\theta)$ be an eigenfunction of \eqref{3.1}. A   direct computation yields
\begin{align}\label{3.3}
   -\frac{T'' + \left[(n-1)\frac{C_\kappa}{S_\kappa} - \phi'\right]T'}{T} -\frac 1 {S_\k^2}\frac{\Delta_{\S^{n-1}}\psi}{\psi}=\mu.
\end{align}
where $\Delta_{\S^{n-1}}$ denotes the  Laplace-Beltrami operator  on $\S^{n-1}$. Thus $\psi$ is an eigenfunction of $\Delta_{\S^{n-1}}$.
Recall that  the eigenvalues of $\Delta_{\S^{n-1}}$ are $k(k+n-2)$  for $k=0, 1, 2, \cdots$. Consequently, from  \eqref{3.3} we obtain 
\begin{align*} 
T'' + \left[(n-1)\frac{C_\kappa}{S_\kappa} - \phi'\right]T' + \left(\mu - \frac{k(k+n-2)}{S_\kappa^2}\right)T = 0.
\end{align*}
It follows that the first nonzero eigenvalue $\mu_1(B_R)$ of  \eqref{3.1}  is either the second eigenvalue $\tau_2$ of
 \begin{align}
     \begin{cases}\label{3.4}
T'' + \left[(n-1)\frac{C_\kappa}{S_\kappa} - \phi'\right]T'+ \tau T = 0, & \text{in } (0,R),\\
T'(0) = 0,\quad T'(R)  = 0,
\end{cases}
 \end{align}
or the first eigenvalue $\l_1$ of 
\begin{align}\label{3.5}
    \begin{cases}
T'' + \left[(n-1)\frac{C_\kappa}{S_\kappa} - \phi'\right]T'+ (\l-\frac{n-1}{S_\k^2})T=0, & \text{in } (0,R),\\
T(0) = 0,\quad T'(R) = 0.
\end{cases}
\end{align}
We now show $\l_1<\tau_2$.

 Let $g$ and $f$ be eigenfunctions associated to $\l_1$ and $\tau_2$, respectively.
By the Sturm-Liouville theorem,  $g$ has constant sign on on $(0,R)$ and we assume without loss of generality that $g(r)>0$ for $r\in (0,R)$. The function $f(r)$ has two nodal sets: there exists $r_{0}\in (0,R)$ such that $f(r)>0,r\in (0,r_{0})$ and $f(r)<0,r\in (r_{0},R)$.
Set
$$
p(r)=S_{\k}(r)^{n-1}e^{-\phi(r)}.
$$
Then  \eqref{3.4} can be rewritten as 
\begin{align*}
\begin{cases}
\left(p(r)f'(r)\right)' + \tau_2 \,p(r)\,f(r) = 0, \quad r\in (0, R),\\
f'(0) = f'(R) = 0,
\end{cases}
\end{align*}
Integrating over $(0,R)$ gives
\begin{align}\label{3.6}
    \int_{0}^{R}\tau p(r)f(r)dr=-p(r)f'(r)\Big|_{0}^{R}=0.
\end{align}
Since $f(r)<0$ for $r\in (r_{0},R)$, we deduce from \eqref{3.6} that
\begin{align*}
    0<\int_{0}^{r}\tau_2 p(r)f(r)dr=-p(r)f'(r), \quad r\in (0, R).
\end{align*}
 Hence $f'(r)<0 $ on $ r\in (0,R)$.
Let $h(r)=f'(r)<0$. Differentiating \eqref{3.4}, we derive
\begin{align}\label{3.7}
   h'' + h'\left(\dfrac{(n-1)C_{\kappa}}{S_{\kappa}} - \phi'\right) + \tau_2 h+(-\frac{n-1}{S_{\k}^{2}}-\phi'')h = 0, \quad \text{in } (0,R).
\end{align}
Since $\phi(r)$ is convex, we have $-\phi''(r)h(r)\geq 0$.  Thus  \eqref{3.7} implies the differential inequality
\begin{align}\label{3.8}
    h'' + h'\left(\dfrac{(n-1)C_{\kappa}}{S_{\kappa}} - \phi'\right) + \tau_2 h-\frac{n-1}{S_{\k}^{2}}h \leq  0, \quad \text{in } (0,R),
\end{align}
with boundary conditions  $h(0)=0$ and $h(R)=0$.

Comparing \eqref{3.8} with  \eqref{3.5}, we obtain 
\begin{align}\label{3.9}
    &\int_{0}^{R}\left(h''(r)g(r)-h(r)g''(r)\right)p(r)dr\nonumber\\
    +&\int_{0}^{R}\left(\dfrac{(n-1)C_{\kappa}(r)}{S_{\kappa}(r)} - \phi'(r)\right) \left(h'(r)g(r)-h(r)g'(r)\right)p(r)dr\nonumber\\
    +&\int_{0}^{R}(\tau_2-\l_1)h(r) g(r)p(r)dr\nonumber\\
    \leq& 0.
\end{align}
Integration by parts gives
\begin{align}\label{3.10}
    &\int_{0}^{R}(h''(r)g(r)-h(r)g''(r))p(r)dr\nonumber\\
    =&\int_{0}^{R}(h'(r)g(r)-h(r)g'(r))'p(r)dr\nonumber\\
    =&\int_{0}^{R}((h'(r)g(r)-h(r)g'(r))p(r))'dr-\int_{0}^{R}(h'(r)g(r)-h(r)g'(r))p'(r)dr.
\end{align}
Substituting  \eqref{3.10} into \eqref{3.9}, we have 
\begin{align} \label{3.11}
    &(\tau_2-\l_1)\int_{0}^{R}h(r)g(r)p(r)dr\nonumber\\
    \leq& -(h'(r)g(r)-h(r)g'(r))p(r)\Big|_{0}^{R}\nonumber\\
    =&-(h'(R)g(R)-h(R)g'(R))p(R)\nonumber\\
    =&-h'(R)g(R)p(R).
\end{align}
Since $h(r)\leq 0$ and $h(R)=0$, we have $h'(R)>0$.  Hence  $-h'(R)g(R)p(R)< 0$.
Because $g(r)> 0$ and $h(r)< 0$ on $r\in (0,R)$, it follows from \eqref{3.11} that
\begin{align*}
    \l_1< \tau_2.
\end{align*}
Thus $\mu_1(B_R)=\l_1$. Recalling that the first nonzero eigenvalue of $\Delta_{\S^{n-1}}$ has muliplicity $n$ with corresponding eigenfucntions  $\psi_i(\theta)$ given by  the linear coordinate functions restricted to $\S^{n-1}$, we conclude that the first nonzero eigenvalue of \eqref{3.1} has muliplicity $n$ and the corresponding eigenfucntions are given by $u_i(r, \theta)=T_1(r)\psi_i(\theta)$, where $T_1(r)$ is the first eigenfunction of \eqref{3.2}.
\end{proof}

\begin{lemma}\label{lm3.2}
  The function  $T_{1}(r)$ is strictly increasing on $(0,R)$, where $R\in (0,\infty)$ for $\kappa=0,-1$ and $R\in (0,\frac{\pi}{2})$ for $\kappa=1$.
\end{lemma}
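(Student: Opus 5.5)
We prove the claim by contradiction, using the Rayleigh quotient characterization of $\lambda_1$ in \eqref{3.5}. With $p(r)=S_\k(r)^{n-1}e^{-\phi(r)}$, the problem \eqref{3.2} reads
\begin{align*}
(pT')'+\Big(\lambda_1-\frac{n-1}{S_\k^2}\Big)pT=0,\qquad T(0)=0,\quad T'(R)=0,
\end{align*}
and $\lambda_1$ is the minimum of
\begin{align*}
Q(T)=\frac{\int_0^R\big(T'^2+\frac{n-1}{S_\k^2}T^2\big)p\,dr}{\int_0^R T^2p\,dr}
\end{align*}
over admissible $T$ with $T(0)=0$. Normalize $T_1>0$ on $(0,R)$, as in the proof of Lemma \ref{lm3.1}. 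Near $r=0$ the equation is of Bessel type with indicial exponents $1$ and $1-n$. Regularity therefore forces $T_1(r)\sim cr$ with $c>0$, so $T_1'>0$ near $0$.

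Suppose $T_1'$ is not positive throughout $(0,R)$. Let $r_1\in(0,R)$ be the first zero of $T_1'$, and set $M=T_1(r_1)$. Define the truncated function $\tilde T=T_1$ on $[0,r_1]$ and $\tilde T\equiv M$ on $[r_1,R]$. Then $\tilde T$ is Lipschitz and admissible, and $\tilde T\ge T_1$ provided $T_1\le M$ on $[r_1,R]$.

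The main obstacle is exactly this ordering. I would first try to avoid it with a direct ODE argument. At a zero $r_1$ of $T_1'$ the equation gives
\begin{align*}
T_1''(r_1)=\Big(\frac{n-1}{S_\k^2(r_1)}-\lambda_1\Big)T_1(r_1).
\end{align*}
So a maximum at $r_1$ requires $\lambda_1 S_\k^2(r_1)\ge n-1$.

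Now look at the behaviour of $T_1$ after $r_1$. A subsequent interior minimum $r_2>r_1$ would need $\lambda_1S_\k^2(r_2)\le n-1$. But $S_\k$ is increasing on $(0,R)$, and this is where the hypothesis $R<\pi/2$ for $\k=1$ is used. Hence $\lambda_1S_\k^2(r_2)>\lambda_1S_\k^2(r_1)\ge n-1$, a contradiction, unless the relevant inequalities are all equalities. That degenerate case is excluded by uniqueness for the ODE.

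It follows that after $r_1$ the function $T_1$ is strictly decreasing all the way to $R$ and has no interior minimum. Since $T_1'(R)=0$, $R$ is then a critical point that is a local minimum from the left. This gives
\begin{align*}
T_1''(R)\ge0, \qquad\text{i.e.}\qquad \lambda_1S_\k^2(R)\le n-1.
\end{align*}
Combined with $\lambda_1S_\k^2(r_1)\ge n-1$ and $S_\k(R)>S_\k(r_1)$, this is impossible (equality cases are again handled by ODE uniqueness). Hence $T_1'$ has no zero in $(0,R)$, and since $T_1'>0$ near $0$ we get $T_1'>0$ on $(0,R)$.

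If this critical-point argument needs reinforcement, the fallback is the truncation $\tilde T$. Once $T_1\le M$ on $[r_1,R]$ is known (which the monotone decrease after $r_1$ supplies), we have $\tilde T'^2\le T_1'^2$. However, the term $\frac{n-1}{S_\k^2}\tilde T^2$ increases, so the Rayleigh comparison is not immediate. For this reason I would rely primarily on the critical-point argument above and keep the truncation as a secondary check.

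Notably, the argument uses only the monotonicity of $S_\k$ on $(0,R)$, not the convexity of $\phi$. The drift term $-\phi'T'$ vanishes at every critical point, so it plays no role.
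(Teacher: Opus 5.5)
Your argument is correct in substance, but it takes a different route from the paper.

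The paper argues globally. It writes the equation as $(pT_1')'=-\bigl(\lambda_1-(n-1)S_\kappa^{-2}\bigr)pT_1$, with $\lambda_1=\mu_1(B_R)$, and integrates over $(0,R)$. Using $T_1'(R)=0$ and $p(r)T_1'(r)\to0$ as $r\to0$, this gives $\int_0^R\bigl(\lambda_1-(n-1)S_\kappa^{-2}\bigr)T_1p\,dr=0$. The bracket is strictly increasing, so it changes sign exactly once. Hence every partial integral $\int_0^r$ is negative, which is exactly the statement $p(r)T_1'(r)>0$.

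You argue locally at zeros of $T_1'$. The equation at the first critical point $r_1$ forces $\lambda_1S_\kappa^2(r_1)\ge n-1$. Monotonicity of $S_\kappa$ then rules out any later critical point with $T_1''\ge0$, in particular at $R$.

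Both proofs use the same single input: strict monotonicity of $S_\kappa$ on $(0,R)$, which is where $R<\pi/2$ enters for $\kappa=1$. As you correctly observe, convexity of $\phi$ is not needed. The paper's version is shorter and avoids two things yours requires: the Frobenius analysis at $0$ giving $T_1'>0$ near $0$, and nondegeneracy at $r_1$.

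On nondegeneracy, there is a small imprecision. Your claim that $T_1$ is strictly decreasing right after $r_1$ needs $T_1''(r_1)<0$. The case $T_1''(r_1)=0$, i.e. $\lambda_1S_\kappa^2(r_1)=n-1$, is \emph{not} excluded by ODE uniqueness, because $T_1(r_1)\neq0$. Also, the inequality $\lambda_1S_\kappa^2(r_2)>\lambda_1S_\kappa^2(r_1)$ is already strict, so the only degenerate case to handle is this one, not an equality in that chain.

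It can be excluded by differentiating the equation. At such a point $T_1'''(r_1)=\bigl((n-1)S_\kappa^{-2}\bigr)'(r_1)\,T_1(r_1)<0$, so $T_1'<0$ on both sides of $r_1$, contradicting $T_1'>0$ on $(0,r_1)$.

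A simpler fix: once $\lambda_1\ge(n-1)S_\kappa^{-2}(r_1)$, you have $(pT_1')'<0$ on $(r_1,R)$. Integrating from $r_1$ gives $p(R)T_1'(R)<0$, contradicting $T_1'(R)=0$. This handles the degenerate case and the second-derivative test at $R$ in one step. It is really the paper's integral argument started at $r_1$ instead of $0$.

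The Rayleigh-quotient setup and the truncation fallback play no role and can be dropped.
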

\begin{proof}
   Equation \eqref{3.2} can be rewritten as 
   \begin{align}\label{3.12}
       (e^{-\phi(r)}S_{\kappa}^{n-1}(r)T'_{1}(r))'=-(\mu_1(B_R)-\frac{n-1}{S_{\k}^{2}(r)})T_{1}(r)e^{-\phi(r)}S_{\k}^{n-1}(r).
   \end{align}
   Integrating  \eqref{3.12} over $(0,R)$ and using the boundary condition $T_1'(R)=0$, we obtain
   \begin{align*}
        &\int_{0}^{R}(\mu_1(B_R)-\frac{n-1}{S_{\k}^{2}(r)})T_{1}(r)e^{-\phi(r)}S_{\k}^{n-1}(r)dr\\
        =& -(e^{-\phi(r)}S_{\kappa}^{n-1}(r)T'_{1}(r))\Big|_{0}^{R}\\
        =&-e^{-\phi(R)}S_{\kappa}^{n-1}(R)T'_{1}(R)\\
        =&0.
   \end{align*}
  For $\kappa=0,-1$, the function $\mu_1(B_R)-\frac{n-1}{S_{\k}^{2}(r)}$ is strictly increasing on $(0,R)$ for any $R>0$. For $\kappa=1$, this function is strictly increasing on $(0,R)$ provided $R\in (0,\frac{\pi}{2})$. Hence, in all cases under consideration, for every $r\in (0,R)$, we have 
  \begin{align*}
       0&> \int_{0}^{r}(\mu-\frac{n-1}{S_{\k}(r)^{2}})T_{1}(r)e^{-\phi(r)}S_{\k}(r)^{n-1}dr= -(e^{-\phi(r)}S_{\kappa}(r)^{n-1}T'_{1}(r))\Big|_{0}^{r}\nonumber\\
       &=-e^{-\phi(r)}S_{\kappa}(r)^{n-1}T'_{1}(r).
  \end{align*}
Thus $T_1'(r)>0$ on $(0, R)$, so $T_{1}$ is strictly increasing.
\end{proof}

\begin{lemma}\label{lm3.3}
   If $(\frac{C_{\k}(r)}{S_{\k}(r)}\phi'(r))'\geq 0$, the function $\frac{T_{1}(r)}{S_{\k}(r)}$ is monotonically decreasing on $(0,R)$.
\end{lemma}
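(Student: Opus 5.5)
The plan is to study the logarithmic derivative of $T_1$ and compare it with that of $S_\k$. By Lemma~\ref{lm3.2}, $T_1>0$ and $T_1'>0$ on $(0,R)$, so $v:=T_1'/T_1$ is well defined and positive there. Put $z:=C_\k/S_\k$. Since $C_\k'=-\k S_\k$ and $C_\k^2+\k S_\k^2=1$, we have
\begin{align*}
z'=-\frac{1}{S_\k^2},\qquad \frac{1}{S_\k^2}=z^2+\k .
\end{align*}
The claim that $T_1/S_\k$ is decreasing is equivalent to $D:=v-z\le 0$ on $(0,R)$.

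\textbf{Step 1: a linear first-order equation for $D$.} From \eqref{3.2} with $\l=\mu_1(B_R)$, $v$ satisfies the Riccati equation
\begin{align*}
v'=-v^2-\big((n-1)z-\phi'\big)v-\l+(n-1)(z^2+\k).
\end{align*}
Subtracting $z'=-(z^2+\k)$ and factoring the quadratic terms should give
\begin{align*}
D'+a(r)D=G(r),\qquad a:=v+nz-\phi',\qquad G:=\phi' z-\l+n\k .
\end{align*}
The hypothesis $\big(\tfrac{C_\k}{S_\k}\phi'\big)'\ge0$ says exactly that $G$ is nondecreasing on $(0,R)$. Let $A$ be a primitive of $a$. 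Then $(De^{A})'=Ge^{A}$.

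\textbf{Step 2: boundary behaviour.} Near $r=0$ we have $T_1(r)=cr+O(r^3)$, since $T_1$ is the radial factor of a smooth eigenfunction $T_1\psi_i$. Hence $v=1/r+O(r)$, $z=1/r+O(r)$, so $D\to0$. Also $a\sim (n+1)/r$, so $e^{A}\sim r^{n+1}$ and $De^{A}\to 0$ as $r\to0$. At $r=R$, $T_1'(R)=0$ gives $v(R)=0$, while $z(R)>0$ because $R<\pi/2$ when $\k=1$. Thus $D(R)<0$.

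\textbf{Step 3: sign analysis via monotonicity of $G$.} Because $G$ is nondecreasing, there is $r_1\in[0,R]$ with $G\le0$ on $(0,r_1)$ and $G\ge0$ on $(r_1,R)$.
\begin{itemize}
\item On $(0,r_1)$, $De^{A}$ is nonincreasing starting from the limit $0$, so $D\le0$ there.
\item On $(r_1,R)$, $De^{A}$ is nondecreasing and ends at the negative value $D(R)e^{A(R)}$, so $D<0$ there.
\end{itemize}
This also covers the degenerate cases $r_1=0$ and $r_1=R$. In particular, if $G$ were nonnegative throughout, $D$ would be nonnegative throughout, contradicting $D(R)<0$. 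Hence $D\le0$ on $(0,R)$, which gives $(T_1/S_\k)'=(T_1/S_\k)\,D\le0$. Strict monotonicity should follow because $D<0$ away from at most a degenerate set.

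\textbf{Main obstacle.} The delicate point is Step 2 near $r=0$. One needs the precise asymptotics of $T_1$ there, namely that $T_1$ vanishes to exactly first order. Without this, both $D\to0$ and $De^{A}\to0$ could fail. I would obtain these asymptotics from the Frobenius expansion of \eqref{3.2} at the regular singular point $r=0$, whose indicial roots are $1$ and $-(n-1)$. The regular solution is then odd to leading order: $T_1=cr(1+O(r^2))$.
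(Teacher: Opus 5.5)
Your argument is correct, and it takes a genuinely different route from the paper.

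The paper also works with $w=v-C_\kappa/S_\kappa$ (your $D$). It differentiates the Riccati equation once more to get the second-order equation (3.15), then rules out a positive interior maximum. That step needs the zeroth-order coefficient $\phi''+(n+1)/S_\kappa^2$ to be positive, so it tacitly uses the convexity of $\phi$ from the standing assumptions.

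You instead observe that the Riccati equation already factors as the linear first-order equation $D'+aD=G$, with $G=\frac{C_\kappa}{S_\kappa}\phi'-\mu_1(B_R)+n\kappa$. In fact (3.15) is exactly the derivative of your equation, via $a'=D'-\phi''-(n+1)/S_\kappa^2$. The hypothesis then says precisely that the forcing term is monotone, and an integrating factor finishes the job. This buys three things:
\begin{itemize}
\item You never use convexity of $\phi$. You need only the stated hypothesis, smoothness of $\phi$ at $o$, and $T_1>0$ on $(0,R]$.
\item The role of the condition $\bigl(\frac{C_\kappa}{S_\kappa}\phi'\bigr)'\ge 0$ becomes transparent.
\item Strictness comes for free, and you undersell it.
\end{itemize}
On the last point: your own observation that $G\ge 0$ on all of $(0,R)$ is impossible forces $G<0$ on some initial interval $(0,s]$. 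There $De^{A}$ is strictly decreasing from the limit $0$, hence negative, and your sign analysis on the remaining intervals then gives $D<0$ on all of $(0,R)$. So $T_1/S_\kappa$ is strictly decreasing; the vague remark about a \emph{degenerate set} is unnecessary. This strict version is what Section 4 actually invokes when it asserts that $\mathcal H'$ is strictly decreasing, whereas the paper's weak maximum-principle argument only delivers $w\le 0$.

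Both proofs rest on the same boundary information, $D(0^+)=0$ and $D(R)<0$. The paper's L'H\^opital computation at $r=0$ silently requires $T_1'(0)\neq 0$ (otherwise the denominator $T_1'+C_\kappa T_1/S_\kappa$ also tends to $0$). You correctly flag this point and justify it through the indicial roots $1$ and $-(n-1)$.
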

\begin{proof}
   It suffices to prove that  $(\frac{T_{1}(r)}{S_{\k}(r)})'\leq 0$, which is equivalent to 
\begin{align}\label{3.13}
\frac{T'_{1}(r)}{T_{1}(r)}\leq \frac{C_{\k}(r)}{S_{\k}(r)}, \quad r\in (0, R).   
\end{align}
    Let $v(r)=\frac{T_{1}'(r)}{T_{1}(r)}$. From  \eqref{3.2}, a direct computation gives 
    \begin{align*}
        v'(r)&=\frac{T_{1}''(r)}{T_{1}(r)}-(\frac{T'_{1}(r)}{T_{1}(r)})^{2}\nonumber\\
        &=-\frac{\left(\dfrac{(n-1)C_\kappa}{S_\kappa} - \phi'\right)T_{1}'(r) + \big(\mu_{1}(B_R) - (n-1)S_\kappa^{-2}\big)T_{1}(r)}{T_{1}(r)}-v(r)^{2}\nonumber\\
        &=-\left(\dfrac{(n-1)C_\kappa}{S_\kappa} - \phi'\right)v(r)-v(r)^{2}-\big(\mu_{1}(B_R) - (n-1)S_\kappa^{-2}\big),
    \end{align*}
and  consequently,
    \begin{align}\label{3.14}
        v''&=-\left(\dfrac{(n-1)C_\kappa}{S_\kappa} - \phi'\right)v'+(\phi''+\frac{n-1}{S_{\k}^{2}})v-2vv'-\frac{2(n-1)}{S_{\k}^{3}}C_{\k}.
    \end{align}
Now set $$w(r)=v(r)-\frac{C_{\k}(r)}{S_{\k}(r)}.$$
It follows from \eqref{3.14} that
\begin{align*}
    (w(r)+\frac{C_{\k}(r)}{S_{\k}(r)})''=&-\left((n-1) \frac{C_\k}{S_\k} - \phi'\right)(w+\frac{C_\k}{S_\k} )'+(\phi''+\frac{n-1}{S_{\k}^{2}})(w+\frac{C_{\k}}{S_{\k}})\\
    &-2(w+\frac{C_\k}{S_\k} )(w+\frac{C_\k}{S_\k} )'-\frac{2(n-1)}{S_{\k}^{3}}C_{\k}\\
    =&-\left((n-1) \frac{C_\k}{S_\k}  - \phi'+2v\right) w'+(\phi''+\frac{n+1}{S_{\k}^{2}})w\\
    &+\frac{C_{\k}}{S_{\k}}\phi''+(\frac{C_{\k}}{S_{\k}})'\phi'+\frac{2C_\k}{S_\k^3}, 
\end{align*}
and  further simplification yields
\begin{align}\label{3.15}
    w''=-\left(\frac{(n-1)C_\kappa}{S_\kappa} - \phi'+2v\right) w'+(\phi''+\frac{n+1}{S_{\k}^{2}})w
    +\left(\frac{C_{\k}}{S_{\k}}\phi'\right)'.
\end{align}
where we  have used the identity $(\frac{C_\k}{S_\k})'=-\frac 1 {S_\k^2}$. 

Suppose, for contradiction, that  $w$ attains a positive maximum at some 
 $r_{0}\in (0,R)$. Then $w'(r_{0})=0$ and $w''(r_{0})\leq 0$. Using  \eqref{3.15} and the assumption $(C_\k/S_\k \phi')'\ge 0$, we estimate
\begin{align*}
     0 \geq& w''(r_{0})\geq (\phi''(r_{0})+\frac{n+1}{S_{\k}(r_{0})^{2}})w(r_{0})+\left(\frac{C_{\k}}{S_{\k}}\phi'\right)'(r_0)\\
     \ge & (\phi''(r_{0})+\frac{n+1}{S_{\k}(r_{0})^{2}})w(r_{0})\\
     >&0
\end{align*}
which is a contradiction. Therefore $w$ cannot have a positive interior maximum.
 We  now examine the boundary behavior of 
$w(r)$. At $r=R$, we have
 $$w(R)=-\frac{C_{\k}(R)}{S_{\k}(R)}<0.$$ 
 For $r\rightarrow 0^{+}$, using $T_1(0)=0$ and $T_1''(0)=0$ (since $T_1(r)\psi_i$ is smooth in $B_R$), we compute 
\begin{align*}
    \lim_{r\rightarrow 0^{+}}w(r)&=\lim_{r\rightarrow 0^{+}}\frac{S_{\k}(r)T_{1}'(r)-T_{1}(r)S_{\k}'(r)}{T_{1}(r)S_{\k}(r)}\\
    &=\lim_{r\rightarrow 0^{+}}\frac{S_{\k}(r)T_{1}''(r)-T_{1}(r)S_{\k}''(r)}{T_{1}'(r)S_{\k}(r)+T_{1}(r)S_{\k}'(r)}\\
    &=\lim_{r\rightarrow 0^{+}}\frac{T_{1}''(r)+\k T_{1}(r)}{T_{1}'(r)+S_{\k}'(r)\frac{T_{1}(r)}{S_{\k}(r)}}\\
    &=0.
\end{align*}
Thus $w(r)\leq 0$ on $(0,R)$, which proves \eqref{3.13}.
\end{proof}
\begin{remark}
    In the case $\k=0$ and $\phi(r)=r^{2}/2$, we have $\frac{C_{\k}(r)}{S_{\k}(r)}\phi'(r)=1$, and hence $(\frac{C_{\k}(r)}{S_{\k}(r)}\phi'(r))'=0$. Therefore Lemma  \ref{lm3.3}  applies in particular to the Gaussian measure.  
\end{remark}

\begin{remark}
    For $\k=0$, the assumption $(\frac{C_{\k}}{S_{\k}}\phi')'\geq 0$ reduces to $$r\phi''-\phi'\geq 0.$$
    If $\phi^{'''}\geq 0$, we have $(r\phi''-\phi')'=\phi^{'''}\geq 0$, and since $r\phi''(r)-\phi'(r)\geq (r\phi''-\phi')(0)=0$, the condition is satisfied.  Thus many choices of $\phi$
 satisfy the conditions of Lemma \ref{lm3.3},  for example 
$\phi(r)=r^{k}$ with $k\geq 2.$
\end{remark}

\begin{remark}
    For $\k=1$, the assumption $(\frac{C_{\k}}{S_{\k}}\phi')'\geq 0$ reduces to $$\tan (r)\phi''(r)-\phi'(r)\geq 0.$$ Since  $\phi(r(x))$ is smooth and $\phi''(r)\ge 0$, we have $\phi'(0)=0$ and $\phi'(r)\geq 0$. Hence for $r>\frac{\pi}{2}$, the assumption $(\frac{C_{\k}}{S_{\k}}\phi')'\geq 0$ is not satisfied.
\end{remark}

Let $T_1(r)$ be the first eigfenfunction of  \eqref{3.2}, namely
\begin{align}\label{3.16}
T_1'' + \left((n-1)\frac{C_\kappa}{S_\kappa} - \phi'\right)T_1' + \big(\mu_1(B_R)- (n-1)S_\kappa^{-2}\big)T_1 = 0,\quad r\in (0, R)
\end{align}
with $T_1(0)=0$, $T_1'(R)=0$, and $T_1(r)>0$ for $r\in (0, R)$.
Define the quantities:
\begin{equation}\label{3.17}
\begin{aligned}
A_R=&\int_0^R T_{1}(r)^2e^{-\phi(r)}S_\k(r)^{n-1}\, dr,\\
Q_R=&\int_0^R T_{1}'(r)^2e^{-\phi(r)}S_\k(r)^{n-1}\, dr\\
H_R=&\int_0^R \frac{T_{1}(r)^2}{S_{\k}(r)^2}e^{-\phi(r)}S_\k(r)^{n-1}\, dr.
\end{aligned}
\end{equation}
Multiplying \eqref{3.16} by $T_1$ and integrating by parts with respect with respect to the radial measure $e^{-\phi(r)}S_\k(r)dr$ over $(0, R)$, we obtain the following energy identity
\begin{equation}\label{3.18}
Q_R+(n-1)H_R=\mu_{1}(B_R) A_R.
\end{equation}

\section{Proof of Theorem \ref{thm1.1}}\label{sect4}
In this section, we prove Theorem \ref{thm1.1}. Let $\Omega \in \mathcal{E}_\k$ be a connected origin-symmetric Lipschitz domain with weighted volume bounded by that of the ambient space (or by $|B_{\pi/2}|_{\gamma_\k}$ when $\k=1$), and let $B_R$ is geodesic ball of radius $R$ centered at $o$ such that
$$|\Omega|_{\gamma_\k}=|B_R|_{\gamma_\k}.$$
In the following, we interpret the integral over $\Omega$ as the integral over the tangent plane $T_oM_\k$. 

\begin{proof}
Let $(r, \theta)$ denote the polar coordinates centered at $o$, and let $(x_1, \cdots, x_n$)  be  normal coordinates on the tangent plane $T_o M_\k$.
The restrictions of the linear coordinate functions $\psi_i$ on $\mathbb{S}^{n-1}$ are given by
\begin{align*}
\psi_i(\theta)=\frac{x_i}{|x|},
\end{align*}
and satisfy
\begin{align}\label{4.1}
    \sum_{i=1}^n \psi_i^2(\theta)=1.
\end{align}
Let  $dx$ denote the volume element on  the tangent plane. Then the  weighted volume element on 
$M_\k$ is given by 
$$d\gamma_{\k}=(S_{\k}/r)^{n-1}e^{-\phi(r)}dx=S_{\k}(r)^{n-1}e^{-\phi(r)}drd\theta,$$
where $d\theta$  denotes the volume element on $\S^{n-1}$.

Let $T_{1}(r)$ be the first eigenfunction of  \eqref{3.2}, and 
define $G(r) : [0,\infty) \to [0,\infty)$ by
\begin{align*}
G(r) =
\begin{cases}
T_{1}(r), & r < R, \\
T_{1}(R), & r \geq R.
\end{cases}
\end{align*}
For $1\le i\le n$, define the trial functions
\begin{align*}
    v_i(x) = G(r(x))\psi_i(\theta).
\end{align*}
Since $\Omega$ is symmetric about $o$, we have 
\begin{align*}
    \int_{\Omega}v_{i}(x)d\gamma_{\k}=0, \quad i=1,2,\cdots, n.
\end{align*}
The functions $v_1,\ldots,v_n$ are linearly independent. Indeed, suppose $\alpha\in\R^n$ and
\begin{align*}
v_\alpha:=\sum_{i=1}^{N}\alpha_i v_i=0
\quad\text{a.e. in }\Omega.
\end{align*}
By continuity, $v_\alpha=0$ throughout $\Omega$, i.e.,
\begin{align*}
v_\alpha(x)=G(|x|)\frac{\alpha\cdot x}{|x|},
\end{align*}
away from the origin. 
Since $G(r)>0$ for $r>0$, it follows that  $\alpha\cdot x=0$ a.e. in $\Omega$,  hence $\alpha=0$. Thus the functions $v_i$ are linearly independent.

For $1\leq i, j\leq n$, define the matrices $J=(J_{ij})$ and $K=(K_{ij})$ by
\begin{align*}
            J_{ij}=\int_\Omega v_iv_j\,d\gamma_{\k}=\int_\Omega G(r)^2\psi_i\psi_j\,d\gamma_{\k},
\end{align*}
and
\begin{align*}
        K_{ij}=\int_\Omega \langle \nabla v_i,\nabla v_j\rangle_{M_\k}\,d\gamma_{\k},
\end{align*}
where   $\langle \cdot, \cdot\rangle_{M_{\k}}$  denotes the inner product with respect to the metric of $M_\k$. Clearly $J$ is an $L^2$-Gram matrix, and $J\succeq 0$.
Now, we  verify that the matrix $K$ is positive definite. For any $\xi\in \mathbb{R}^{n}$,
\begin{align*}
    \xi^{T}K\xi=\int_{\Omega}\left\langle \nabla (\sum_{i=1}^{n}\xi_{i}v_{i}),\nabla(\sum_{j=1}^{n}\xi_{j}v_{j})\right\rangle_{M_\k} d\gamma_{\k}\geq 0,
\end{align*}
If equality holds, then 
 $\sum_{i=1}^{n}\xi_{i}v_{i}(x)$
 has vanishing  weak gradient and  is therefore constant on the connected domain $\Omega$. 
Since
 \begin{align*}     \int_{\Omega}\sum_{i=1}^{n}\xi_{i}v_{i}d\gamma_{\k}=\sum_{i=1}^{n}\xi_{i}\int_{\Omega}v_{i}d\gamma_{\k}=0,
 \end{align*} 
we conclude that $\sum_{i=1}^{n}\xi_{i}v_{i}(x)=0$. By linear independence of $\{v_{1},\cdots, v_{n}\}$, we conclude $\xi_{i}=0$ for all $i$. Thus $K$ is  positive definite.
Applying Lemma ~\ref{lm2.2} yields 
\begin{align}\label{4.2}
    \sum_{i=1}^{n}\frac{1}{\mu_{i}(\Omega)}\geq \tr(K^{-1}J).
\end{align}
We now compute $K$. Since
$$
\nabla v_i=G'(r)\psi_i(\theta)\nabla r+G(r)\nabla \psi_i(\theta)
$$
and $\langle \nabla r,\nabla\psi_i\rangle_{M_\k} =\p \psi_i/\p r=0$, we have
\begin{align*}
    \left\langle \nabla v_{i},\nabla v_{j}\right\rangle_{M_\k}
    =(G')^{2}\psi_i\psi_j+G^{2}\langle \nabla \psi_i,\nabla \psi_j\rangle_{M_\k}.
\end{align*}
A direct computation in $\R^n$ gives
\begin{align*}
    \langle \nabla \psi_i,\nabla \psi_j\rangle_{{\R^{n}}}
    =\sum_{m=1}^{n}\frac{\partial \psi_i}{\partial x_{m}}\frac{\partial \psi_j}{\partial x_m}
    =\sum_{m=1}^{n}(\frac{\delta_{im}}{r}-\frac{x_{i}x_{m}}{r^{3}})(\frac{\delta_{jm}}{r}-\frac{x_{j}x_{m}}{r^{3}})
    =\frac{\delta_{ij}-\psi_i\psi_j}{r^{2}}.
\end{align*}
Since
$$
\langle \nabla \psi_i,\nabla \psi_j\rangle_{M_\k}=\frac{1}{S_\k(r)^2}\langle \nabla \psi_i,\nabla \psi_j\rangle_{{\S^{n-1}}}=\frac{r^2}{S_\k(r)^2}\langle \nabla \psi_i,\nabla \psi_j\rangle_{{\R^{n}}},
$$
we obtain
\begin{align*}
\langle \nabla \psi_i,\nabla \psi_j\rangle_{M_\k}=\frac{1}{S^{2}_{\k}(r)}(\delta_{ij}-\psi_i\psi_j).
\end{align*}
Therefore
\begin{align*}
    \left\langle \nabla v_{i},\nabla v_{j}\right\rangle_{M_\k}=G'(r)^{2}\psi_i\psi_j+\frac{G(r)^2}{S_{\k}(r)^2}(\delta_{ij}-\psi_i\psi_j).
\end{align*}
Consequently, the matrix $K$ can be expressed as
\begin{align}\label{4.3}
K_{ij}&=\int_{\Omega}G'(r)^{2}\psi_i\psi_jd\gamma_{\k}+\int_{\Omega}\frac{G(r)^{2}}{S_{\k}(r)^{2}}(\delta_{ij}-\psi_i\psi_j)d\gamma_{\k}.
\end{align}
For each $\theta\in\Sph^{n-1}$, define the radial slice
\begin{align*}
        E_\theta:=\{r>0:(r, \theta)\in\Omega\}
\end{align*}
and the corresponding radial weighted measure
\begin{align*}
        Y(\theta)=\int_{E_\theta}S_{\k}(r)^{n-1}e^{-\phi(r)}\,dr,
        \qquad
        Y_R=\int_0^R S_{\k}(r)^{n-1}e^{-\phi(r)}\,dr.
\end{align*}
Using polar coordinates,  the weighted volume of 
$\Omega$ is
\begin{align*}
        |\Omega|_{\gamma_{\k}}
        =\int_{\Sph^{n-1}}\int_{E_\theta} e^{-\phi(r)}S_{\k}(r)^{n-1}\,dr\,d\theta
        =\int_{\Sph^{n-1}}Y(\theta)\,d\theta.
\end{align*}
Similarly,
\begin{align*}
        |B_R|_{\gamma_{\k}}
        =
        \int_{\Sph^{n-1}}Y_R\,d\theta.
\end{align*}
Since $|B_R|_{\gamma_{\k}}=|\Omega|_{\gamma_{\k}}$, it follows that
\begin{equation}\label{4.4}
        \int_{\Sph^{n-1}}(Y(\theta)-Y_R)\,d\theta=0.
\end{equation}
Let 
$$
\psi(\theta)=(\psi_1(\theta), \psi_2(\theta), \cdots, \psi_n(\theta))^\top
$$
and 
define 
\begin{align*}
        Z:=\int_{\Sph^{n-1}}(Y(\theta)-Y_R)\psi \psi^\top \,d\theta.
\end{align*}
By  \eqref{4.1}, we have $\tr (Z)=0$. 
Let
\begin{align*}
        V(r)=\int_0^r S_{\k}(t)^{n-1}e^{-\phi(t)}\,dt,
\end{align*}
and let  $\tau$ denote its inverse, namely $V(\tau(y))=y$. Define
\begin{align*}
\mathcal A(y)=\int_0^{\tau(y)}G(r)^2S_{\k}(r)^{n-1}e^{-\phi(r)}\,dr,
\end{align*}
and
\begin{align*}
\mathcal H(y)=\int_0^{\tau(y)}\frac{G(r)^2}{S_{\k}(r)^2}S_{\k}(r)^{n-1}e^{-\phi(r)}\,dr.
\end{align*}
Direct differentiation yields, for $y>0$
\begin{align*}
\mathcal A'(y)=G(\tau(y))^2,
\qquad
\mathcal H'(y)=\frac{G(\tau(y))^2}{S_{\k}(\tau(y))^2}.
\end{align*}
By Lemma~\ref{lm3.2} and Lemma ~\ref{lm3.3}, $\mathcal A'$ is non-decreasing and
$\mathcal H'$ is strictly decreasing. Hence $\mathcal A$ is convex and
$\mathcal H$ is strictly concave. Since $Y_R=V(R)$ and $\tau(Y_R)=R$, we obtain
\begin{equation}\label{4.5}
\mathcal A(y)\geq \mathcal A(Y_R) +\mathcal A'(Y_R) (y-Y_R) = A_R+G(R)^2(y-Y_R),
\end{equation}
and 
\begin{equation}\label{4.6}
\mathcal H(y)\leq \mathcal H(Y_R)+\mathcal H'(Y_R) (y-Y_R) = H_R+h_R(y-Y_R),
\end{equation}
where $A_R$ and $H_R$ are defined in \eqref{3.17}, and
\begin{equation*}
h_R=\frac{G(R)^2}{S_{\k}(R)^2}.
\end{equation*}
By Lemma ~\ref{lm3.2} and Lemma ~\ref{lm3.3}, $G(r)$ is non-decreasing  and $\frac{G(r)^{2}}{S_{\k}(r)^{2}}$ is strictly decreasing for $0<r<R$. For $\k=1$, since $\Omega$ is contained in an open hemisphere, we have $\frac{G(r)^{2}}{S_{\k}(r)^{2}}$ is strictly decreasing for $r\in E_{\theta}$ for all $\theta\in \mathbb{S}^{n-1}$.
By Lemma~\ref{lm2.3}, the initial interval of the same
$S_{\k}^{n-1}(r)e^{-\phi(r)}dr$-measure minimizes the integral of the non-decreasing
function $G^2$ and maximizes the integral of the strictly decreasing
function $G(r)^2/S_{\k}(r)^2$. Applying this principle to the radial slices $E_\theta$ and combining with \eqref{4.5} and \eqref{4.6}, we obtain, for every $\theta\in\S^{n-1}$,
\begin{equation}\label{4.7}
        \int_{E_\theta}G(r)^2 S_{\k}(r)^{n-1}e^{-\phi(r)}\,dr
        \geq A_R+G(R)^2(Y(\theta)-Y_R),
\end{equation}
and
\begin{equation}\label{4.8}
        \int_{E_\theta}\frac{G(r)^2}{S_{\k}(r)^2}S_{\k}(r)^{n-1}e^{-\phi(r)}\,dr
        \leq H_R+h_R(Y(\theta)-Y_R).
\end{equation}
Moreover, since $G'=0$ on $[R,\infty)$, we have
\begin{equation}\label{4.9}
        \int_{E_\theta}G'(r)^2S_{\k}(r)^2e^{-\phi(r)}\,dr
        \leq Q_R,
\end{equation}
where $Q_R$ is defined in \eqref{3.17}.

 Using polar coordinates,  the matrix  $J$ can be written as
\begin{align*}
J=\int_{\S^{n-1}}\left(\int_{E_\theta}G(r)^2S_{\k}(r)^{n-1}e^{-\phi(r)}\,dr\right)\psi\psi^\top\,d\theta.
\end{align*}
 For any  $v\in\mathbb R^n$, applying  \eqref{4.7} yields
\begin{align*}
\begin{aligned}
v^\top  Jv
        &=
        \int_{\Sph^{n-1}}
        \left(
        \int_{E_\theta}G(r)^2S_{\k}(r)^{n-1}e^{-\phi(r)}\,dr
        \right)
        (v\cdot\psi)^2\,d\theta \\
        &\geq
        A_R\int_{\Sph^{n-1}}(v\cdot\psi)^2\,d\theta
        +
        G(R)^2
        \int_{\Sph^{n-1}}(Y(\theta)-Y_R)(v\cdot\psi)^2\,d\theta.
\end{aligned}
\end{align*}
Equivalently,
\begin{align*}
        J\succeq
        A_R\int_{\Sph^{n-1}}\psi\psi^\top \,d\theta
        +
        G(R)^2
        \int_{\Sph^{n-1}}(Y(\theta)-Y_R)\psi\psi^\top \,d\theta .
\end{align*}
Using the identity
\begin{align}\label{4.10}
    \int_{\S^{n-1}} \psi_i\psi_j\, d\theta=\frac{\omega_{n-1}}{n} \delta_{ij},
\end{align}
where $\omega_{n-1}=|\S^{n-1}|$, we obtain
\begin{equation}\label{4.11}
        J\succeq aI+cZ,
\end{equation}
with 
$$a=\frac{\omega_{n-1}}{n} A_R, \qquad c=G(R)^2.$$ 
We next estimate $K$ from above. From \eqref{4.3}, again in
polar coordinates,
\begin{align}\label{4.12}
K=&
\int_{\S^{n-1}} \left(\int_{E_\theta}G'(r)^2S_{\k}(r)^{n-1}e^{-\phi(r)}\,dr\right)\psi\psi^\top d\theta\nonumber\\
    &+\int_{\S^{n-1}} \left(\int_{E_\theta}\frac{G(r)^2}{S_{\k}(r)^2}S_{\k}(r)^{n-1}e^{-\phi(r  )}\,dr\right) (I-\psi\psi^\top )
       d\theta.
\end{align}
Applying the bounds \eqref{4.8} and
\eqref{4.9} to \eqref{4.12}, we get
\begin{align*}
K
\preceq
\int_{\Sph^{n-1}}
        \left[
        Q_R\psi\psi^\top
        +
        \bigl(H_R+h_R(Y(\theta)-Y_R)\bigr)(I-\psi\psi^\top )
        \right]d\theta,
\end{align*}
where we used the fact that both $\psi\psi^\top$ and  $I- \psi\psi^\top$ are
positive semidefinite.
Expanding the right-hand side and using the identity \eqref{4.10}, we obtain
\begin{align*}
\begin{aligned}
K
\preceq&
Q_R\int_{\Sph^{n-1}}\psi\psi^\top \,d\theta
+
H_R\int_{\Sph^{n-1}}(I-\psi\psi^\top )\,d\theta\\
&+
h_R\int_{\Sph^{n-1}}(Y(\theta)-Y_R)(I-\psi\psi^\top)\,d\theta \\
=&\frac{\omega_{n-1}}{n}\bigl(Q_R+(n-1)H_R\bigr)I-h_RZ.
\end{aligned}
\end{align*}
By the energy identity \eqref{3.18}, this becomes
\begin{equation}\label{4.13}
        K\preceq \mu_{1}(B_R) aI-dZ,
\end{equation}
where
\begin{equation*}
        d=h_R=\frac{G(R)^2}{R^2}>0.
\end{equation*}

Recall that  $K\succ 0$, $J\succeq0$, $\tr Z=0$, and
$a,c,d$ are all positive.  Thus the hypotheses of Lemma~\ref{lm2.1} are satisfied by
\eqref{4.11} and \eqref{4.13}. Applying Lemma~\ref{lm2.1}
gives
\begin{align*}
        \tr(K^{-1}J)\geq \frac{n}{\mu_{1}(B_R)}.
\end{align*}
Combining this with \eqref{4.2},we obtain the desired inequality
\begin{align}\label{4.14}
    \sum_{i=1}^{n}\frac{1}{\mu_{i}(\Omega)}\geq \frac{n}{\mu_{1}(B_R)}.
\end{align}

It remains to discuss the equality case. From Lemma  \ref{lm2.1}, equality in  \eqref{4.14} implies
$$
Z=0, \quad J=aI, \quad K=\mu_{1}(B_R)aI.
$$
In particular, 
$\tr J=na$, which gives 
\begin{align}\label{4.15}
    \int_{\Omega}G(r)^{2}d\gamma_{\k}=\int_{B_R}G(r)^{2}d\gamma_{\k}.
\end{align}
Using \eqref{4.15} and the definition of $G$, we have 
\begin{align*}
\int_{B_R\setminus\Omega}G(r)^{2}d\gamma_{\k}=G(R)^{2}\int_{\Omega\setminus B_R}d\gamma_{\k}=G(R)^{2}\int_{B_R\setminus \Omega}d\gamma_{\k}.
\end{align*}
Since $G(r)$ is strictly increasing on $(0,R)$, it follows
$$
0=\int_{B_R\setminus \Omega}G(R)^{2}-G(r)^{2}d\gamma_{\k}\geq 0.
$$
Moreover $|B_R\setminus \Omega|_{\gamma_{\k}}=0$ and $|\Omega\setminus B_R|_{\gamma_{\k}}=0$. 
Since the density of $d\gamma_{\k}=e^{-\phi(r)}d\V_k$ is positive, we obtain
\begin{align*}
    |\Omega\setminus B_R|+   |B_R\setminus \Omega|=0.
\end{align*}
As $\Omega$ is a Lipschitz domain, it follows (as in \cite{Fr26}),  $\Omega=B_R$ up to a set of measure zero, and hence $\Omega$ coincides with $B_R$. This completes the proof of Theorem \ref{thm1.1}.
\end{proof}

 \bibliographystyle{plain}
	\bibliography{ref}
\end{document}